\documentclass[reqno]{amsart}

    \usepackage{float}
    \usepackage{amsmath}
    \usepackage{graphicx}
    \usepackage{latexsym}
    \usepackage{amsfonts}
    \usepackage{amssymb}
    \usepackage{verbatim}
 
    \usepackage[hyperindex=true]{hyperref}

    \theoremstyle{plain}
    \newtheorem{theorem}{Theorem}
    \newtheorem{corollary}[theorem]{Corollary}

    \theoremstyle{definition}
    \newtheorem{assumption}{Assumption}

    \newtheorem{remark}[theorem]{Remark}
    \newtheorem*{remark*}{Remark}

    \newcommand{\pr}{\mathbf P}

    \newcommand{\abs}[1]{\left\lvert#1\right\rvert}

    \makeatletter
    \@namedef{subjclassname@2020}{\textup{2020} Mathematics Subject Classification}
    \makeatother

\begin{document}
    \title[A weak invariance principle for triangular arrays ]
    {A weak invariance principle for triangular arrays of independent random 
variables in some Besov spaces}

    \author[Giraudo]{Davide Giraudo}
    \address{Institut de Recherche Math\'ematique Avanc\'ee (IRMA), UMR~7501,
Universit\'e de Strasbourg and CNRS,
7~rue Ren\'e Descartes, 67000~Strasbourg, France}
    \email{dgiraudo@unistra.fr.}

    \author[Sharipov]{Sadillo Sharipov}
    \address{V.I.Romanovskiy Institute of Mathematics, Uzbekistan Academy of Sciences, Tashkent, Uzbekistan}
    \email{sadi.sharipov@yahoo.com}

\begin{abstract}
We establish a Donsker--Prokhorov invariance principle in some Besov spaces. Specifically, we show that polygonal line processes associated with partial sum processes of a triangular array of row-wise independent random variables converge in distribution to Brownian motion, extending earlier results in the literature.
\end{abstract}


    \keywords{triangular array, invariance principle, Lindeberg condition, Besov 
space, Brownian motion.}
    \subjclass[2020]{Primary 60F17; Secondary 60F05}
    \maketitle

\section{Introduction}\label{sec1}

Suppose that for each $n\geq 1$, there is given a sequence of independent random variables
\begin{equation*} 
X_{n,1}, X_{n,2}, \dots, X_{n,m_{n}}
\end{equation*}
with $\mathbf{E}X_{n,j}=0$, $\sum_{i=1}^{m_{n}}\sigma_{n,i}^{2}=1$ where $\sigma_{n,j}^{2}= \operatorname{Var}(X_{n,j})\in (0,\infty)$ for $j=1,\dots, m_{n}$, $n\geq 1$.

Denote $a_{n,0}:=0$, $a_{n,j}:=\sum_{i=1}^{j}\sigma_{n,i}^{2}$ (then $a_{n,m_{n}}=1$),
$$ S_{n,0}:=0, \ \ S_{n,j}:=\sum_{i=1}^jX_{n,i}.$$

Consider the process $\mathbb{S}_{n}=\left(S_{n}(t),\, t\in [0,1]\right)$ defined by affine interpolation
between the points $\left(a_{n,k}, S_{n,k}\right)$, $k=0,\dots,m_n$, namely,
\begin{equation}\label{eq2}
S_{n}(t)
= S_{n,k-1} + \frac{t-a_{n,k-1}}{\sigma_{n,k}^{2}}\,X_{n,k},
\qquad
a_{n,k-1}\leq t \leq a_{n,k},
\end{equation}
for $k=1,\dots,m_n$.

The statement on weak convergence of the distributions of random polygonal
lines \eqref{eq2} to the Wiener measure is often referred to as the
\emph{Donsker--Prokhorov invariance principle}.
M.~Donsker \cite{Donsker} studied the case of independent and identically distributed (i.i.d.) random
variables $X_{1}, X_{2}, \ldots$
with zero mean and unit variance.
Setting $S_{n}=X_{1}+\cdots+X_{n}$, $n \ge 1$, $S_{0}:=0$,
he considered random polygonal lines with vertices
$\left(i/n, S_{i}/n \right)$,
$i =0,\ldots, n$.
The general version of this result, formulated under the Lindeberg condition
and including Donsker's case as a special one, was proved by Yu.
Prokhorov \cite{Prokhorov56}.

When these processes are obtained by
polygonal interpolation, weak convergence is usually considered in the space of continuous functions $C[0,1]$ equipped with the supremum norm $\left\| \cdot \right\|_{\infty}$.
Since both partial sum processes and Brownian motion exhibit finer path 
regularity, it is natural to study invariance principles in stronger function 
spaces. This observation motivates us to study an invariance principle in 
certain Besov spaces.

The Besov spaces were first introduced by O. Besov in \cite{Besov59}, where he defined a family of function spaces using moduli of smoothness and approximation properties, in connection with embedding and extension theorems. In this paper, we focus on a subclass of Besov spaces characterized by two parameters, which we now define.
For $p\geq 1$, we denote by $\mathbb{L}_{p}\left([0,1]\right)$ the space of (equivalence classes for the almost everywhere equality) of functions $x\colon [0,1]\to \mathbb{R}$ such that
$ \int_{0}^{1}\left|x(t)\right|^{p}\mathrm{d}t<\infty$, equipped with the norm 
$\left\|\cdot \right\|_{p}$.

For $x\in \mathbb{L}_{p}([0,1])$, the modulus of regularity is defined by
\begin{equation*}
\omega_p(x,\delta):=\sup_{\left|h\right|\leq \delta}\left(\int_{I_h}\left|x\left(u+h \right)-x(u)\right|^{p} \mathrm{d}u\right)^{1/p}, \ \ 0< \delta < 1,
\end{equation*}
where $I_{h}=[0,1]\cap [-h,1-h]$.

For $\alpha\in ]0,1]$ and $p\geq 1$, the Besov space
$B_{p,\alpha}^{o}([0,1])$ is defined by
\begin{equation*}
B_{p,\alpha}^{o}([0,1]):=\left\{x\in \mathbb{L}_{p}([0,1]): \lim_{\delta\to 0}\delta^{-\alpha}\omega_{p}\left(x,\delta\right)=0  \right\}.
\end{equation*}
This space is equipped with the norm
\begin{equation*}
\|{x}\|_{B_{p,\alpha}^{o}([0,1])}=\|{x}\|_{p}+\sup_{0<\delta<1}\delta^{-\alpha}\omega_{p} \left(x,\delta\right), \ \ x\in B_{p,\alpha}^{o}([0,1]).
\end{equation*}
which turns $B_{p,\alpha}^{o}([0,1])$ into a separable Banach space. Moreover, for
$0<\beta<\alpha$, the embedding $B_{p,\alpha}^{o}([0,1])\hookrightarrow 
B_{p,\beta}^{o}([0,1])$ is continuous. For $1\leq q<p$, the embedding  
$B_{p,\alpha}^{o}([0,1])\hookrightarrow
B_{q,\alpha}^{o}([0,1])$ is also continuous. It is worth noting that
H\"older spaces arise as the limiting case $p=\infty$.

The main purpose of the present paper is to establish an invariance principle 
for partial sum-process formed from triangular array of row-wise independent 
random variables. Besov spaces $B^{o}_{p,\alpha}([0,1])$ provide a finer scale 
of regularity, combining local smoothness of order
$\alpha$ with $\mathbb{L}_{p}$-integrability. For $p\geq 1$,
$ 0< \alpha < 1/2$,
we establish convergence in distribution of $\mathbb{S}_{n}$ to Wiener process 
in $B^{o}_{p,\alpha}([0,1])$.
The proof relies on a Besov-specific tightness criterion based on dyadic 
decompositions and inspired by the H\"olderian approach of Ra\c{c}kauskas and 
Suquet~\cite{RS03},
the extension of which to Besov spaces is not straightforward. Indeed, the 
$\ell^p$ norm of sequence of increments are involved, which cannot be handled 
directly by union bounds and a truncation argument is needed. Moreover, the 
tightness criterion given in Theorem~10 in \cite{GR17} applies only to 
stationary sequences with a partial sum process built on the intervals 
$((k-1)/n,k/n)$. The case $p\alpha\leq 1$ also requires different arguments as 
stationarity was used in \cite{GR17} as well as the fact that the intervals 
where the partial sum process is affine have constant length.

Our result extends corresponding result by Giraudo and 
Ra\v{c}kauskas~\cite{GR17} obtained for i.i.d.\ sequences.

The paper is organized as follows. In Section 2, we provide our results, and 
Section 3 is devoted to the Besov spaces properties and a tightness criterion. 
Section 4 contains proofs of 
our results.  

\section{Main results}
This section is devoted to the statements of the main results. We will consider 
separately the case where $p\alpha >1$  and $p\alpha\leq 1$.

To state our main result in the case $p\alpha >1$, we introduce the truncated 
random variables. For each $\tau>0$,
we define
$$ X_{n,k,\tau}:=X_{n,k}\mathbf{1}\{\left|X_{n,k
} 
\right|\leq \tau \sigma_{n,k}^{2(\alpha-1/p)} \}, \ \ k=1,\dots, m_{n}, \ \ 
n\geq 1.$$
\begin{theorem}\label{thm:BesovThm1}
Let $p>2$ and $1/p<\alpha<1/2$, and define
\begin{equation}\label{eq3}
q(p,\alpha):=\frac{1}{1/2-\alpha+1/p}.
\end{equation}
Assume that the Lindeberg condition holds, that is, 
\begin{equation}\label{eq4}
\text{for each }\varepsilon>0,\qquad\lim_{n\to\infty}\sum_{k=1}^{m_n}
\mathbf{E}\left[X_{n,k}^2\,\mathbf{1}\{|X_{n,k}|\ge\varepsilon\}\right]=0.
\end{equation}
Assume moreover that the following two conditions are satisfied:
\begin{equation}\label{eq5}
\text{for each }\varepsilon>0,\qquad
\lim_{n\to\infty}\sum_{k=1}^{m_n}
\mathbf{P}\left(|X_{n,k}|\ge
\varepsilon\,\sigma_{n,k}^{2(\alpha-1/p)}\right)=0,
\end{equation}
and, for some $q>q(p,\alpha)$,
\begin{equation}\label{eq6}
\lim_{\tau\to0}\limsup_{n\to\infty}
\sum_{k=1}^{m_n}
\sigma_{n,k}^{-2(\alpha-1/p)q}
\mathbf{E}\bigl|X_{n,k,\tau}\bigr|^{q}=0.
\end{equation}
Then, as $n\to\infty$,
\begin{equation}\label{eq7}
\mathbb{S}_{n} \xrightarrow{\mathcal{D}} \mathbb{W} \qquad\text{in the space}\ \ B_{p,\alpha}^{o}([0,1]),
\end{equation}
where $\mathbb{W}=\bigl(W(t),\,t\in[0,1]\bigr)$ is a standard Wiener process 
and $\xrightarrow{\mathcal{D}}$ denotes the convergence in distribution in the 
mentioned space.
\end{theorem}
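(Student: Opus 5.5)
The plan is the usual two-step scheme: convergence of finite-dimensional distributions plus tightness in $B_{p,\alpha}^{o}([0,1])$. Finite-dimensional convergence comes from the classical Prokhorov invariance principle. Under the Lindeberg condition \eqref{eq4}, $\mathbb{S}_n\to\mathbb{W}$ in $C[0,1]$, so the finite-dimensional distributions converge. Uniqueness of the limit in $B^{o}_{p,\alpha}$ then follows because point evaluations and $\mathbb{L}_p$-continuous functionals separate Borel probability measures on this separable Banach space. This step is routine; all the work is in tightness.

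For tightness I would use a Faber--Schauder/dyadic description of $B^{o}_{p,\alpha}$. Writing $\lambda_{j,k}(x)=x((2k+1)2^{-j-1})-\frac12\bigl(x(k2^{-j})+x((k+1)2^{-j})\bigr)$, the norm is equivalent to $\sup_j 2^{j(\alpha-1/p)}\bigl(\sum_k|\lambda_{j,k}(x)|^p\bigr)^{1/p}$ plus $|x(1)|$; this needs $p\alpha>1$. A sequence is then tight if, for every $\varepsilon>0$,
\[
\lim_{J\to\infty}\limsup_{n\to\infty}\pr\Bigl(\sup_{j\ge J}2^{jp(\alpha-1/p)}\sum_{k}\abs{\lambda_{j,k}(\mathbb{S}_n)}^p>\varepsilon\Bigr)=0 .
\]
In the paper this is presumably the criterion of Section~3. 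I then replace $X_{n,k}$ by $X_{n,k,\tau}$. By \eqref{eq5}, with probability tending to one no truncation happens, so $\mathbb{S}_n$ coincides with the truncated polygonal process. Recentering costs little. Because the truncation level $\tau\sigma_{n,k}^{2(\alpha-1/p)}$ tends to zero, we have $\sum_k|\mathbf{E}X_{n,k,\tau}|\,\sigma_{n,k}^{-2(\alpha-1/p)}\cdot(\dots)$, controlled by \eqref{eq5} and Lindeberg together with a polygonal-line estimate. Also $\sum_k\mathrm{Var}X_{n,k,\tau}\to1$. I would rescale time accordingly so the vertices still sit at the points $a_{n,k}$.

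Next I split each dyadic level $j$ according to the mesh. Let $\ell$ be the interval $[a_{n,i-1},a_{n,i}]$ containing a point $t$.

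\textbf{Coarse scales}, $2^{-j}\ge\sigma_{n,i}^2$. Here $\lambda_{j,k}$ is essentially a difference of partial sums over blocks of variance $\asymp 2^{-j}$, plus boundary pieces. Rosenthal's inequality with exponent $q$ gives
\[
\mathbf{E}|\lambda_{j,k}|^q\lesssim 2^{-jq/2}+\sum_{i\in\text{block}}\mathbf{E}|X_{n,i,\tau}|^q .
\]
Since $\sum_k|\lambda_{j,k}|^p$ is an $\ell^p$ sum and not a maximum, I bound the probability by Markov's inequality applied to $\bigl(\sum_k|\lambda_{j,k}|^p\bigr)^{q/p}$ with $q>p$. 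The subadditivity $\bigl(\sum_k|\lambda_{j,k}|^p\bigr)^{q/p}\le 2^{j(q/p-1)}\sum_k|\lambda_{j,k}|^q$ follows from Hölder's inequality. The Gaussian part then contributes
\[
2^{jq(\alpha-1/p)}\,2^{j(q/p-1)}\,2^{j}\,2^{-jq/2}=2^{-jq(1/2-\alpha)},
\]
which is summable in $j$. The moment part contributes $2^{jq(\alpha-1/p)+jq/p}\sum_i\mathbf{E}|X_{n,i,\tau}|^q$, and on these scales $2^{-j}\ge\sigma_{n,i}^2$ lets me trade powers of $2^j$ for $\sigma_{n,i}^{-2}$. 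To make the resulting series over $j$ converge, I would use \eqref{eq3}, i.e. $q>q(p,\alpha)$, possibly via a more careful weighting of the levels; the outcome is a bound by the left side of \eqref{eq6} times a constant, which vanishes as $\tau\to0$.

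\textbf{Fine scales}, $2^{-j}<\sigma_{n,i}^2$. Here $\mathbb{S}_n$ is affine on most dyadic intervals, so $\lambda_{j,k}=0$ except for $O(1)$ intervals per vertex. At a vertex, $|\lambda|\le 2^{-j}|X_{n,i}|/\sigma_{n,i}^2$. Summing over the admissible $j$ gives a geometric series dominated by the largest $j$, which yields $2^{jp(\alpha-1/p)}|\lambda|^p\lesssim \bigl(|X_{n,i}|\sigma_{n,i}^{-2(\alpha-1/p)}\bigr)^p$. Summing over $i$ and using the truncation, this is at most $\tau^{p-q}\sum_i\sigma_{n,i}^{-2(\alpha-1/p)q}|X_{n,i,\tau}|^q$, again controlled by \eqref{eq6}. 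I take the supremum over $j\ge J$ by a union over $j$ in the coarse regime and by this deterministic bound in the fine regime.

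The main obstacle is the coarse-scale estimate. Because the partition $\{a_{n,k}\}$ is non-uniform, blocks straddling dyadic points have uneven variances, and one must simultaneously get the summable $2^{-jq(1/2-\alpha)}$ factor and keep the large-jump part under \eqref{eq6}. I would first try the Rosenthal-plus-Hölder route above. If the exponent bookkeeping fails, I would instead bound $\mathbf{E}\sum_k|\lambda_{j,k}|^p$ directly with Rosenthal at exponent $p$, keeping $q$ only for the tail of the supremum over $j$.
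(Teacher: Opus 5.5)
\textbf{The gap is the choice $q>p$ in your coarse-scale step.} The H\"older inequality $\bigl(\sum_{r}|a_r|^p\bigr)^{q/p}\le 2^{j(q/p-1)}\sum_r|a_r|^q$ costs a factor $2^{j(q/p-1)}$ that cannot be recovered. Even with the exponent tracked correctly (you wrote $2^{jq\alpha}$; it is $2^{j(q\alpha-1)}$), the moment part at level $j$ is $2^{j(q\alpha-1)}\sum\mathbf{E}|X_{n,i,\tau}|^q$, with the sum over $i$ such that $\sigma_{n,i}^2\le 2^{-j}$. Summing over $j$ gives a constant times
$\sum_i\sigma_{n,i}^{2-2q\alpha}\mathbf{E}|X_{n,i,\tau}|^q=\sum_i\sigma_{n,i}^{-2(q/p-1)}\,\sigma_{n,i}^{-2q(\alpha-1/p)}\mathbf{E}|X_{n,i,\tau}|^q$.
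This exceeds the sum in \eqref{eq6} by the unbounded factor $\sigma_{n,i}^{-2(q/p-1)}$, and no reweighting of levels removes it. A symptom is that your Gaussian part is summable for every $q$, so the hypothesis $q>q(p,\alpha)$ is never used. Your fine-scale step $|y|^p\le\tau^{p-q}|y|^q$ for $|y|\le\tau$ is also false when $q>p$, so the two halves of your proposal are inconsistent. Nor is this a bookkeeping issue: for an exponent $q>p$, hypothesis \eqref{eq6} is genuinely too weak. Take $\sigma_{n,k}^2=1/n$ and let $X_{n,k}$ be a centered Gaussian plus an independent jump $\pm n^{-(\alpha-1/p)}/\log n$ occurring with probability $(\log n)^p/n$. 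This array satisfies \eqref{eq4}, \eqref{eq5} and \eqref{eq6} for every $q>p$, since the jump contribution to \eqref{eq6} is of order $(\log n)^{p-q}$. Yet for $n=2^N$, the quantity $2^{N(\alpha-1/p)}\bigl(\sum_{r\in D_N}|\lambda_r(S_n)|^p\bigr)^{1/p}$ is of order one with probability tending to one. So criterion (b) of Theorem~\ref{thm:tightness} fails.

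What you need is an exponent $q\in(q(p,\alpha),p]$. Such $q$ exist because $q(p,\alpha)<p$, and \eqref{eq6} must be read for such a $q$, as the paper tacitly does (``Let $q$ be such that $q(p,\alpha)<q<p$''); it then holds for all larger exponents. Concavity then gives $\bigl(\sum_r|a_r|^p\bigr)^{q/p}\le\sum_r|a_r|^q$ with no loss. Rosenthal's inequality on the centered interior blocks $I_r=\{u_n(r)+2,\dots,u_n(r^+)\}$, each of variance at most $2^{-j}$, gives a variance part $\sum_{j\ge J}2^{j(1-q/q(p,\alpha))}$; this is exactly where $q>q(p,\alpha)$ enters. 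The moment part becomes precisely the sum in \eqref{eq6}, using $\mathbf{1}\{k\in I_r\}\le\mathbf{1}\{\sigma_{n,k}^2\le 2^{-j}\}$ and $\sum_{j:\,2^{-j}\ge\sigma_{n,k}^2}2^{jq(\alpha-1/p)}\le C\sigma_{n,k}^{-2q(\alpha-1/p)}$. Your fallback, Rosenthal at exponent $p$, is this argument with $q=p$. It works once \eqref{eq6} holds for some $q\le p$, so it should have been your main line.

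Your vertex bound by $\sum_i\bigl(\sigma_{n,i}^{-2(\alpha-1/p)}|X_{n,i}|\bigr)^p$ is sound and also covers the boundary pieces at coarse scales. One correction: the geometric series is dominated by the smallest admissible $j$, not the largest. Combined with $q\le p$, this bound is the right way to handle the boundary terms. It is more careful than the paper's $B_{J,n}=\sup_{j\ge J}2\cdot 2^{j/p}Y_n$, which is infinite whenever $Y_n>0$. Finally, your recentering sentence is not yet an argument. It is supplied by the bound $|\mathbf{E}X_{n,k,\tau}|\le\sigma_{n,k}\,\mathbf{P}\bigl(|X_{n,k}|>\tau\sigma_{n,k}^{2(\alpha-1/p)}\bigr)^{1/2}$, followed by Cauchy--Schwarz over each block and \eqref{eq5}.
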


\begin{remark}
Since $B_{\infty,\alpha}^{o}([0,1])$ coincides with H\"older spaces 
$\mathcal{H}_{\alpha}^{o}([0,1])$, Theorem~\ref{thm:BesovThm1} extends and 
complements the weak invariance principle established in a H\"olderian framework 
in~\cite{RS03}.
\end{remark}
\begin{remark}
It is worth noting that conditions \eqref{eq5} and \eqref{eq6} of Theorem 1 are satisfied under more general condition.
Indeed, one may replace these assumptions by the stronger non-truncated moment condition: there exists $q>q(p,\alpha)$ (where $q(p,\alpha)$ is defined by \eqref{eq3}) such that
\begin{equation*}
\mathbf{E}|X_{n,k}|^{q} < \infty
\quad\text{and}\quad
\sum_{k=1}^{m_n}\sigma_{n,k}^{-2\beta q}\,\mathbf{E}|X_{n,k}|^{q} \to 0, \ \ n\to \infty.
\end{equation*}
\end{remark}
The case $p\alpha\leq 1$ is, as in the case of stationary martingale difference 
sequences, special since only a finite second moment and the Lindeberg 
condition are required.
\begin{theorem}\label{thm:WIP_p_alpha_leq_1}
Let $p\geq 1$ and let $\alpha\in [0,1/2)\cap [0,1/p]$. Suppose that 
the Lindeberg condition \eqref{eq4} is satisfied.
Then, as $n\to\infty$, the convergence \eqref{eq7} holds.
\end{theorem}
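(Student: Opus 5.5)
Since finite-dimensional convergence of $\mathbb{S}_n$ to $\mathbb{W}$ already follows from the Lindeberg condition \eqref{eq4} (this is the classical Prokhorov theorem, and the fidi distributions are continuous functionals even in $B^o_{p,\alpha}$ after testing against suitable linear functionals, e.g.\ the Faber--Schauder coefficients), the whole work is tightness of $(\mathbb{S}_n)$ in $B^o_{p,\alpha}([0,1])$. I would use the sequential description of $B^o_{p,\alpha}$ via Faber--Schauder coefficients: $x\mapsto \|x\|\sim \sup_j 2^{j(\alpha-1/p)}\bigl(\sum_{r}|\lambda_{j,r}(x)|^p\bigr)^{1/p}$, where $\lambda_{j,r}(x)=x(t_{r,j+1})-\frac12\bigl(x(t_{r-1,j})+x(t_{r,j})\bigr)$ at dyadic points. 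Tightness then reduces to showing, for each $\varepsilon>0$,
\begin{equation*}
\lim_{J\to\infty}\limsup_{n\to\infty}\pr\Bigl(\sup_{j\geq J}2^{j(\alpha-1/p)}\Bigl(\sum_{r}|\lambda_{j,r}(\mathbb{S}_n)|^p\Bigr)^{1/p}>\varepsilon\Bigr)=0 ,
\end{equation*}
together with $\|\mathbb S_n\|$ bounded in probability, which is immediate. The key point when $p\alpha\le1$ is that the weight $2^{j(\alpha-1/p)}$ is nonincreasing in $j$, which allows a second-moment argument with no extra integrability.

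I would split the levels into $j$ with $2^{-j}\ge \max_k\sigma^2_{n,k}$ (coarse) and $2^{-j}<\max_k\sigma^2_{n,k}$ (fine); note $\max_k\sigma_{n,k}^2\to0$ by Lindeberg. For coarse levels, each $\lambda_{j,r}$ is a combination of increments of $\mathbb{S}_n$ over intervals of length $2^{-j}$, and an increment of $\mathbb{S}_n$ over $[s,t]$ equals a sum of whole $X_{n,k}$ plus two fractional boundary pieces, so $\mathbf E|\mathbb S_n(t)-\mathbb S_n(s)|^2\le C|t-s|$ uniformly. Then, using $\mathbf E(\sum_r|\lambda_{j,r}|^p)^{1/p}$ is awkward for $p>2$, I would instead bound via the $\mathbb L_p$ modulus directly: $\mathbf E\,\omega_p(\mathbb S_n,\delta)^p$ needs $p$-th moments, so for $p>2$ I must truncate: write $X_{n,k}=X_{n,k}\mathbf 1\{|X_{n,k}|\le\eta\}+$ remainder. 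The truncated part has $p$-th moments controlled by Rosenthal, giving $\mathbf E|\text{incr}|^p\le C(|t-s|^{p/2}+\eta^{p-2}|t-s|)$, and summing $2^j$ terms with weight $2^{jp(\alpha-1/p)}$ gives $\sum_{j\ge J}2^{jp\alpha}(2^{-jp/2}+\eta^{p-2}2^{-j})$. The first part is summable since $\alpha<1/2$; the second term $\eta^{p-2}2^{j(p\alpha-1)}$ is bounded (not summable when $p\alpha=1$), so instead of summing I use the sup over $j$ together with Markov at each level combined with a maximal/chaining over levels — here I would exploit monotonicity of the weight, bounding the sup over $j\ge J$ by the $\mathbb L_p$ modulus at scale $2^{-J}$ times $2^{-J(\alpha-1/p)}$... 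The remainder part: its contribution to $\sum_r|\lambda_{j,r}|^p$ is controlled by $\sum_k|X_{n,k}|\mathbf 1\{|X_{n,k}|>\eta\}$ in $\mathbb L_1$-like fashion; since $p\alpha\le1$, a jump of size $|X|$ spread over length $\sigma^2$ has Besov-$(p,\alpha)$ norm at most $|X|\sigma^{2(1/p-\alpha)}\cdot$const$\le C|X|$, so the remainder has norm $\le C\sum_k|X_{n,k}|\mathbf 1\{|X_{n,k}|>\eta\}$ hmm, whose expectation is $\le\eta^{-1}\sum\mathbf E X^2\mathbf 1\{|X|>\eta\}\to0$ by Lindeberg.

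For fine levels, on each dyadic interval of length $<\sigma_{n,k}^2$ the path is (nearly) affine, so $|\lambda_{j,r}|$ vanishes except near vertices, and one gets the direct bound $2^{j(\alpha-1/p)}(\sum_r|\lambda_{j,r}|^p)^{1/p}\le C\bigl(\sum_k|X_{n,k}|^p\sigma_{n,k}^{-2p}2^{-jp}\cdot 2^{j(p\alpha-1)}\bigr)^{1/p}$ plus vertex terms; using $2^{-j}<\sigma^2_{n,k}$ this is $\le C\max_k|X_{n,k}|\sigma_{n,k}^{2(1/p-\alpha)}\cdot\ldots\le C\max_k|X_{n,k}|$, which tends to $0$ in probability by Lindeberg. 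The main obstacle is the borderline $p\alpha=1$ on coarse levels, where the level sums are not geometrically decaying; I expect to handle it by the truncation at fixed $\eta$ then letting $\eta\to0$ after $n\to\infty$, with the Rosenthal term $\eta^{p-2}$ made small uniformly in $j$, and using a single Markov inequality on $\sup_j$ via the $\ell^p$-sum structure rather than a union bound.
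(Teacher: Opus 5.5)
\textbf{The case $p\alpha=1$ is a genuine gap.} It is part of the statement for every $p>2$, your proposal leaves it open, and the fix you sketch cannot work. Write $\widetilde{\mathbb S}_n$ for the polygonal line built from the centred truncated variables. Your truncated Rosenthal bound $\mathbf{E}|\widetilde{\mathbb S}_n(t)-\widetilde{\mathbb S}_n(s)|^p\le C(|t-s|^{p/2}+\eta^{p-2}|t-s|)$ is in fact valid for all $s<t$, whatever the $\sigma_{n,k}^2$. But at $p\alpha=1$ it produces the same term $\eta^{p-2}$ at every level $j\ge J$. Even if you use that $\lambda_r$ vanishes away from vertices, about $\log_2(1/\max_k\sigma_{n,k}^2)\to\infty$ levels keep a term of that order. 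So for fixed $\eta$ the bound diverges as $n\to\infty$, and sending $\eta\to0$ afterwards is useless. A ``single Markov inequality on $\sup_j$'' does not help, since level-wise moment bounds only give $\mathbf{E}\sup_j\le\sum_j\mathbf{E}$.

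Tuning $\eta=\eta_n$ does not help either. Take $\sigma_{n,k}^2=1/m_n$ and $X_{n,k}=\pm\eta_n$ with probability $1/(2m_n\eta_n^2)$ each (zero otherwise), with $\eta_n=1/\log\log m_n$. The Lindeberg condition holds. Truncating below $\eta_n$ leaves a remainder of total variation $\asymp 1/\eta_n$. Truncating above $\eta_n$ leaves $\sum_{j\ge J}\sum_{r\in D_j}\mathbf{E}|\lambda_r|^p\asymp\eta_n^{p-2}\log(m_n\eta_n^2)\to\infty$.

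The missing idea is the paper's pathwise extraction of the small factor. It uses $\int_{I_h}|g|^p\le\sup|g|^{p-2}\int_{I_h}g^2$, where $\sup|g|\le\tau$ with high probability by tightness in $C[0,1]$. What remains is an $\mathbb{L}_2$ quantity controlled by second moments alone: the paper bounds $\sup_{|h|\le\delta}\int(S_n(t+h)-S_n(t))^2\,\mathrm{d}t$ deterministically via maxima of partial sums and then applies Doob's inequality. In your language, for $j\ge J$ one has $\sum_{r\in D_j}|\lambda_r|^p\le\omega_\infty(\mathbb S_n,2^{-J})^{p-2}\sum_{r\in D_j}\lambda_r^2$, where $\omega_\infty(\mathbb S_n,\delta):=\sup_{|s-t|\le\delta}|S_n(s)-S_n(t)|$. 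Moreover $\sup_j\sum_{r\in D_j}\lambda_r^2$ is bounded in probability uniformly in $n$: the dyadic quadratic variation splits into a $j$-free diagonal part $\le\sum_kX_{n,k}^2$ and an off-diagonal part with second moment $\le 2\cdot 2^{-j}$. With this, no truncation is needed at all.

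\textbf{The Faber--Schauder framework is misapplied.} The characterization you invoke (the theorem of Ciesielski et al.\ quoted in the paper) requires $1/p<\alpha<1$, which is precisely the range excluded here. For $\alpha\le 1/p$ the coefficients $\lambda_r$ are not continuous functionals. For instance, for $\alpha<1/p$, a tent of height $1$ and width $2/m$ centred at $1/2$ has Besov norm $\asymp m^{\alpha-1/p}\to0$ while $\lambda_{1/2}=1$. So neither the norm equivalence nor your identification of the limit through the $\lambda_r$ is available; this is why the paper works with $\omega_p$ directly.

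What survives is a one-sided bound. Set $b_j(x):=2^{j(\alpha-1/p)}(\sum_{r\in D_j}|\lambda_r(x)|^p)^{1/p}$; for $0<\alpha<1$, summing $\omega_p(x_j,\delta)\lesssim\min(1,2^j\delta)2^{-j\alpha}b_j(x)$ over the level pieces $x_j$ gives $\|x\|_{B^o_{p,\alpha}}\lesssim\sup_j b_j(x)$. This makes your level criterion sufficient for tightness, but you must state and prove it. You also need to treat $\alpha=0$ separately ($B^o_{p,0}=\mathbb{L}_p$ with an equivalent norm) and identify the limit through the injection into $\mathbb{L}_p$.

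With this repair, your argument for $p\alpha<1$ is a correct alternative to the paper's. Apply the truncated bound at all levels $j\ge J$: the series $\sum_{j\ge J}(2^{jp(\alpha-1/2)}+\eta^{p-2}2^{j(p\alpha-1)})$ converges, so even $\eta=1$ works. Dispose of the remainder, including the centring means, via $\delta^{-\alpha}\omega_p(x,\delta)\le\operatorname{Var}(x)$, valid for $\alpha\le1/p$. This trades the paper's $\sup|g|^{p-2}$ device and Doob's inequality for truncation and Rosenthal's inequality, but it does not reach $p\alpha=1$.

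Drop the fine-level paragraph: it is unnecessary and wrong as written. The path is affine on dyadic intervals of length $2^{-j}$ only when $2^{-j}<\min_k\sigma_{n,k}^2$, not $\max_k$. And an $\ell^p$-sum over all vertices is not bounded by $\max_k|X_{n,k}|$.
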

The following corollary is motivated by the asymptotic behavior of triangular 
arrays whose rows are given by linear combinations of a common i.i.d.\ sequence 
with deterministic coefficients.

\begin{corollary}\label{cor:Besov-Theorem4-Lindeberg}
Let $\left(X_{k}\right)_{k \ge 1}$ be a sequence of i.i.d.\ random variables with zero mean and $\operatorname{Var}(X_{1})=1$.
For each $n \geq 1$, let us given the triangular array of random variables 
$\left(c_{n,k}X_{k}\right)_{1 \le k \le m_{n}}$, where $(c_{n,k})_{1 \le k\le 
m_{n}}$ is a triangular array of non-zero real numbers such that
$ \sum_{k=1}^{m_n} c_{n,k}^{2}=1$.
Suppose that $p>2$, $1/p <\alpha < 1/2$, and $(m_n)$ is non-decreasing with
\begin{equation}\label{eq:condition_on_mn}
 \sup_{n\ge1}\frac{m_{n+1}}{m_n}<\infty,
\end{equation}
and that there exist constants $0<C_{1} \le C_{2} < \infty$ such that
$$
\frac{C_{1}}{m_n}\le c_{n,k}^{2} \le \frac{C_2}{m_n},
\ \  1\le k\le m_{n}.
$$
Then \eqref{eq7} holds, where $S_{n}(t)$ is defined by \eqref{eq2} with $X_{n,k}=c_{n,k}X_{k}$, $1\leq k \leq m_{n}$ if and  only if
\begin{equation}\label{eq9}
    \lim_{t\to\infty} t^{q(p,\alpha)}\,\mathbf{P}\left(\left|X_{1}\right|>t\right)=0.
\end{equation}
If $p\geq 1$ and $\alpha\in [0,1/2)\cap [0,1/p]$, then \eqref{eq7} holds as 
long as $\mathbf{E}X_1^2$ is finite.
\end{corollary}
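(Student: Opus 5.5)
The plan is to derive the corollary from Theorem~\ref{thm:BesovThm1} for sufficiency, from a separate argument for necessity, and from Theorem~\ref{thm:WIP_p_alpha_leq_1} for the last assertion. Write $X_{n,k}=c_{n,k}X_k$, so that $\sigma_{n,k}^2=c_{n,k}^2\asymp m_n^{-1}$. Put $\beta:=\alpha-1/p>0$ and $q_0:=q(p,\alpha)$; note that $q_0>2$ because $\alpha>1/p$. The Lindeberg condition \eqref{eq4} is routine: since $c_{n,k}^2\le C_2/m_n$ and $X_1$ is square integrable, the sum in \eqref{eq4} is at most $C_2\,\mathbf{E}[X_1^2\mathbf{1}\{|X_1|\ge \varepsilon\sqrt{m_n/C_2}\}]\to0$. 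This already gives the final claim ($p\alpha\le1$) through Theorem~\ref{thm:WIP_p_alpha_leq_1}.

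\emph{Sufficiency.} Assume \eqref{eq9}. In \eqref{eq5} the threshold is $\varepsilon\sigma_{n,k}^{2\beta}/|c_{n,k}|$, and $\sigma_{n,k}^{2\beta-1}\asymp m_n^{1/2-\beta}=m_n^{1/q_0}$, since $1/q_0=1/2-\alpha+1/p$. Hence the sum in \eqref{eq5} is at most $m_n\,\mathbf{P}(|X_1|\ge c\,\varepsilon m_n^{1/q_0})$, which tends to $0$ by \eqref{eq9}. For \eqref{eq6}, choose any $q>q_0$. The summand becomes
\[
\sigma_{n,k}^{-2\beta q}|c_{n,k}|^q\,\mathbf{E}\bigl[|X_1|^q\mathbf{1}\{|X_1|\le \tau\sigma_{n,k}^{2\beta}/|c_{n,k}|\}\bigr]
\le C\, m_n^{-q/q_0}\,\mathbf{E}\bigl[|X_1|^q\mathbf{1}\{|X_1|\le C'\tau m_n^{1/q_0}\}\bigr].
\]
Set $T=C'\tau m_n^{1/q_0}$ and write $\mathbf{E}[|X_1|^q\mathbf{1}\{|X_1|\le T\}]\le q\int_0^T t^{q-1}\mathbf{P}(|X_1|>t)\,dt$. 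Splitting at a large $t_0$, where $t^{q_0}\mathbf{P}(|X_1|>t)\le\eta$ holds for $t\ge t_0$, bounds this by $C(t_0)+\eta\,\frac{q}{q-q_0}T^{q-q_0}$. Multiplying by $m_n\cdot m_n^{-q/q_0}$ gives $o(1)+C\eta\,\tau^{q-q_0}$. Hence $\limsup_n\le C\eta\tau^{q-q_0}$ with $\eta$ arbitrary, so the limit is $0$, and \eqref{eq6} holds even before letting $\tau\to0$. Theorem~\ref{thm:BesovThm1} then gives \eqref{eq7}.

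\emph{Necessity.} This is the main obstacle. Suppose \eqref{eq7} holds. The functional $x\mapsto\omega_p(x,\delta)\delta^{-\alpha}$ at scale $\delta=a_{n,k}-a_{n,k-1}$ is not continuous on the limit, so instead I would use the continuous map $x\mapsto\sup_{0<\delta<\delta_0}\delta^{-\alpha}\omega_p(x,\delta)$. On $B^o_{p,\alpha}$ this is continuous and tends to $0$ in probability for $\mathbb{W}$ as $\delta_0\to0$, since $W\in B^o_{p,\alpha}$ almost surely for $\alpha<1/2$. By the portmanteau theorem it follows that
\[
\lim_{\delta_0\to0}\limsup_n\pr\Bigl(\sup_{\delta<\delta_0}\delta^{-\alpha}\omega_p(S_n,\delta)>\varepsilon\Bigr)=0.
\]
Next I would bound $\omega_p(S_n,h)$ from below, with $h=\sigma_{n,k}^2$ of order $1/m_n$. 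On the interval where $S_n$ has slope $X_{n,k}/\sigma_{n,k}^2$, the increments over a shift $h/2$ contribute at least $c\,|X_{n,k}|\,h^{1/p}$. This gives $\max_k|X_{n,k}|\sigma_{n,k}^{-2\beta}\le C\,h^{-\alpha}\omega_p(S_n,h)$, so that $\max_k|X_{n,k}|\sigma_{n,k}^{-2\beta}\to0$ in probability. By independence, $\pr(\max_k|\cdot|>\varepsilon)\to0$ is equivalent to $\sum_k\pr(|X_{n,k}|>\varepsilon\sigma_{n,k}^{2\beta})\to0$, that is $m_n\pr(|X_1|>\varepsilon' m_n^{1/q_0})\to0$ for every $\varepsilon'$, the lower bound $c_{n,k}^2\ge C_1/m_n$ supplying the comparability of thresholds. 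Finally, to pass from the sequence $m_n$ to all $t$, I would use \eqref{eq:condition_on_mn}, which ensures that $m_n\to\infty$ with bounded ratios. (If $m_n$ stays bounded, the Lindeberg condition, which is implied by the convergence, fails.) For $t$ large, pick $n$ with $m_n^{1/q_0}\le t<m_{n+1}^{1/q_0}$; then $t^{q_0}\pr(|X_1|>t)\le K m_n\pr(|X_1|>m_n^{1/q_0})\to0$, which is \eqref{eq9}.
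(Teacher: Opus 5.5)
Your proposal is correct and follows the paper's route. Sufficiency checks \eqref{eq5}--\eqref{eq6} of Theorem~\ref{thm:BesovThm1} as in \cite{RS03}, and the last claim comes from the Lindeberg condition together with Theorem~\ref{thm:WIP_p_alpha_leq_1}. Necessity bounds the Besov modulus from below at scale of order $m_n^{-1}$ on the affine pieces of $S_n$, deduces $m_n\mathbf{P}(|X_1|>\varepsilon m_n^{1/q(p,\alpha)})\to0$ by independence, and interpolates using \eqref{eq:condition_on_mn}.

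Two differences are improvements. You use the Lipschitz functional $x\mapsto\sup_{0<\delta<\delta_0}\delta^{-\alpha}\omega_p(x,\delta)$ instead of a fixed $\delta$, and a single affine piece instead of the paper's $\ell^p$-sum; this makes rigorous the paper's loose step of ``bounding $\delta$ by a constant times $m_n^{-1}$''. One caveat: like the paper, you implicitly need $m_n\to\infty$ in the sufficiency direction and in the last claim, since the statement is false for bounded $m_n$.
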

\begin{remark}
Corollary~\ref{cor:Besov-Theorem4-Lindeberg} extends Theorem~2 of \cite{GR17} from the case of fixed random variables $(X_{k})$ to triangular arrays of random variables of the form $\bigl(c_{n,k}X_k\bigr)_{1\le k\le m_{n}}$.
\end{remark}
\section{Besov spaces framework}
First, we introduce the notation for the Faber--Schauder functions. A convenient way to parametrize this family is by using dyadic numbers directly.
We denote by $D_{j}$ the set of dyadic numbers in $[0,1]$ of level $j$, that is,
$$
D_{0}=\{0,1\}, \qquad
D_{j} =\left\{(2\ell-1)2^{-j} \; ; \; 1 \le \ell \le 2^{j-1} \right\}, \quad j \ge 1.
$$
Set
$$
D =\bigcup_{j \ge 0} D_{j}
$$
and, for $r \in D_j$, write
$$
r^{-}:=r-2^{-j}, \qquad r^{+}:=r+2^{-j}.
$$
The triangular Faber--Schauder functions $\Lambda_r$ for $r \in D_j$, $j>0$, are defined by
$$
\Lambda_r(t)=
\begin{cases}
2^j(t-r^{-}), & t \in (r^{-},r],\\[1mm]
2^j(r^{+}-t), & t \in (r,r^{+}],\\[1mm]
0, & \text{otherwise}.
\end{cases}
$$
When $j=0$, we just take the restriction to $[0,1]$:
$$
\Lambda_{0}(t)=1-t, \qquad \Lambda_{1}(t)=t, \quad t\in[0,1].
$$

\begin{theorem}[Ciesielski et al \cite{CKR93}]
Let $p>1$ and $1/p<\alpha<1$. The Faber--Schauder system $(\Lambda_{r})_{r\in D}$ is a Schauder basis
for $B^{o}_{p,\alpha}([0,1])$: each $x\in B^{o}_{p,\alpha}([0,1])$ has a unique 
representation
$$
x=\sum_{r\in D} \lambda_{r}(x)\Lambda_{r},
$$
where
$$
\lambda_{r}(x):=\frac{x(r)-x(r^{+})+x(r^{-})}{2}, \qquad r\in D_{j},\ j\ge 1,
$$
and, in the particular case $j=0$,
$$
\lambda_{0}(x):=x(0), \qquad \lambda_{1}(x):=x(1).
$$
Moreover, the norm is equivalent to the sequential norm:
$$
\|x\|_{p,\alpha} \sim \|x\|^{\mathrm{seq}}_{p,\alpha}
:= \sup_{j\ge 0} 2^{j(\alpha-1/p)}
\left(\sum_{r\in D_j} |\lambda_{r}(x)|^{p} \right)^{1/p}.
$$
\end{theorem}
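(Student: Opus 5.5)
The plan is to follow the classical route for Schauder-basis results in function spaces. We prove the two norm inequalities
$$
\|x\|^{\mathrm{seq}}_{p,\alpha}\le C\|x\|_{B^o_{p,\alpha}},\qquad \|x\|_{B^o_{p,\alpha}}\le C\|x\|^{\mathrm{seq}}_{p,\alpha},
$$
and then deduce the basis property from density of finite combinations together with uniform boundedness of the partial-sum projections.

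The first preliminary is that $\alpha>1/p$ gives, via a Sobolev-type embedding of $B^o_{p,\alpha}$ into $\mathcal H_{\alpha-1/p}$, that every $x\in B^o_{p,\alpha}$ has a continuous representative. Hence the point evaluations in $\lambda_r(x)$ are well defined and continuous functionals. To prove the embedding, I would control dyadic increments $|x(s)-x(t)|$ by $L_p$-averages over dyadic subintervals. Summing the chain over levels gives a geometric series $\sum_j 2^{-j(\alpha-1/p)}\delta^{-\alpha}\omega_p$ type bound, which converges precisely because $\alpha>1/p$.

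\emph{Upper bound for the sequence norm.} The coefficient $\lambda_r(x)$ is half a second difference at scale $h=2^{-j}$, namely $x(r)-x(r^+)+x(r^-)$ up to sign conventions. Because point values are not directly controlled in $L_p$, I would replace $x(r)$ by averages over small intervals of length $\sim 2^{-j}$. The error between a point value and such an average is bounded by the Hölder estimate at finer scales, summed over levels $k\ge j$. The averaged difference is bounded by $2^{j/p}\big(\int_{\text{local}}|x(u+h)-x(u)|^p\,du\big)^{1/p}$ in an appropriate averaged form. Summing the $p$-th powers over $r\in D_j$, the local intervals have bounded overlap, giving
$$
\Big(\sum_{r\in D_j}|\lambda_r(x)|^p\Big)^{1/p}\lesssim 2^{j/p}\sum_{k\ge j}2^{-(k-j)/p}\,\omega_p(x,2^{-k})\cdots
$$
Multiplying by $2^{j(\alpha-1/p)}$ and using $\omega_p(x,2^{-k})\le 2^{-k\alpha}\sup_\delta\delta^{-\alpha}\omega_p$ yields a convergent geometric sum. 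I expect this step, handling the point values rigorously with a discrete Hardy-type summation, to be the main obstacle.

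\emph{Reverse bound and basis property.} For a finite sum $x=\sum\lambda_r\Lambda_r$, estimate $\omega_p(x,h)$ with $2^{-m}\le h<2^{-m+1}$ by splitting into low levels $j\le m$ and high levels $j>m$. For $j\le m$, $\Lambda_r$ is Lipschitz with constant $2^j$ and the supports of level $j$ are disjoint, so the level-$j$ contribution is at most $h\,2^{j}\,2^{-j/p}\big(\sum_{r\in D_j}|\lambda_r|^p\big)^{1/p}$. For $j>m$, bound $\|f_j(\cdot+h)-f_j\|_p\le 2\|f_j\|_p\lesssim 2^{-j/p}\big(\sum_{r\in D_j}|\lambda_r|^p\big)^{1/p}$. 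Inserting $\big(\sum_{r\in D_j}|\lambda_r|^p\big)^{1/p}\le 2^{-j(\alpha-1/p)}\|x\|^{\mathrm{seq}}$ gives the two sums $h\sum_{j\le m}2^{j(1-\alpha)}\lesssim h^{\alpha}$, using $\alpha<1$, and $\sum_{j>m}2^{-j\alpha}\lesssim h^\alpha$. The $\|x\|_p$ term is trivial. This proves the reverse inequality for finite sums, and these uniform bounds show that the partial-sum operators are uniformly bounded.

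Uniqueness of the expansion follows because the dyadic interpolant of level $J$ agrees with $x$ on $D_0\cup\dots\cup D_J$. For convergence in the "little-o" space, first note that for $x\in B^o_{p,\alpha}$ the tail quantity $\sup_{j\ge J}2^{j(\alpha-1/p)}\|\lambda_{\cdot}\|_{\ell^p(D_j)}$ tends to $0$. This holds because the estimate in the first bound only involves $\omega_p(x,2^{-k})$ for $k\ge j$, and these satisfy $2^{k\alpha}\omega_p(x,2^{-k})\to0$. Applying the reverse bound to the tail (the infinite sum being a limit in sup norm) then shows $\|x-P_Jx\|_{B^o_{p,\alpha}}\to0$.
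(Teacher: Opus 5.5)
The paper gives no proof of this statement. It is quoted from \cite{CKR93} and used only as a black box, through the tightness criterion of Theorem~\ref{thm:tightness}. Your proposal is therefore a self-contained real-variable proof, and its architecture is sound:
\begin{itemize}
\item the embedding into $\mathcal H_{\alpha-1/p}$ makes the $\lambda_r$ continuous functionals;
\item the forward inequality comes from dyadic averages;
\item the reverse inequality for finite sums follows by splitting at the level $m$ with $2^{-m}\le h<2^{-m+1}$, which is exactly where $\alpha<1$ enters;
\item uniform boundedness of the interpolation projections $P_J$, biorthogonality $\lambda_r(\Lambda_s)=\delta_{rs}$, and $\sup_{k\ge j}2^{k\alpha}\omega_p(x,2^{-k})\to0$ then give the basis property in the little-$o$ space.
\end{itemize}
Passing the reverse bound to the infinite tail is justified by lower semicontinuity of $\|\cdot\|_p$ and $\omega_p(\cdot,\delta)$ under the uniform convergence $P_Kx\to x$. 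Citing buys the paper brevity; your route makes explicit where $1/p<\alpha$ and $\alpha<1$ are each used.

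Two points in your forward bound need fixing.

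\emph{The exponent in your display has the wrong sign.} Write $x(r^+)-x(r)=[x(r^+)-A_{[r,r^+]}x]+[A_{[r,r^+]}x-A_{[r^-,r]}x]+[A_{[r^-,r]}x-x(r)]$, with $A_I$ the mean over $I$. The middle terms contribute $2^{j/p}\omega_p(x,2^{-j})$ in $\ell^p(D_j)$. The outer terms, expanded along the chain $I_k(r)=[r-2^{-k},r]$, $k\ge j$, contribute
\[
\sum_{k\ge j}\Big(\sum_{r\in D_j}\big|A_{I_{k+1}(r)}x-A_{I_k(r)}x\big|^p\Big)^{1/p}\lesssim\sum_{k\ge j}2^{k/p}\,\omega_p(x,2^{-k})=2^{j/p}\sum_{k\ge j}2^{(k-j)/p}\,\omega_p(x,2^{-k}).
\]
After multiplying by $2^{j(\alpha-1/p)}$ you get $\|x\|_{p,\alpha}\sum_{m\ge0}2^{-m(\alpha-1/p)}$. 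This converges precisely because $\alpha>1/p$; with your sign that hypothesis would play no role at this step, which is the giveaway.

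\emph{The bound must be localized.} At each level $k$, sum the $p$-th powers over the pairwise disjoint intervals $I_k(r)$, $r\in D_j$, which gives $\lesssim 2^{k}\omega_p(x,2^{-k})^p$, and only then apply Minkowski in $k$. If you instead plug in the global pointwise H\"older bound from the embedding, you lose a factor $2^{j/p}$ after summing over $D_j$, and the argument collapses.

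Finally, the formula for $\lambda_r$ in the statement is a misprint. The expansion and your uniqueness argument require $\lambda_r(x)=x(r)-\frac12\big(x(r^-)+x(r^+)\big)$, which is what the paper actually uses later: $2\lambda_r(S_n)=\Delta_r^{-}-\Delta_r$.
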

The Schmidt orthogonalization procedure (with respect to the inner product in 
$\mathbb{L}_{2}(0,1)$) applied to the
Faber--Schauder system leads to the Franklin system $(f_{k})_{ k\ge 0}$:
$$
f_{k}(t)=\sum_{i=0}^{k} c_{ik}\Lambda_{r_{i}}(t), \qquad t\in [0,1],
$$
with $c_{kk}>0$ for $k\ge 0$, where the matrix $(c_{ik})$ is uniquely determined and
$(r_{i})_{i\ge 0}$ is an enumeration of $D$.

We recall a
tightness criterion in $B^{o}_{p,\alpha}([0,1])$, which is a corollary of the 
tightness criterion established in \cite{Suquet99} for 
Schauder-decomposable Banach spaces.
\begin{theorem}[Giraudo and Ra\v{c}kauskas \cite{GR17}]
\label{thm:tightness}
Let $\left(\xi_{n}\right)$ be a sequence of random processes with paths in 
$B^{o}_{p,\alpha}([0,1])$. The sequence $\left(\xi_n\right)$ is tight
if and only if the following two conditions are satisfied:
\begin{itemize}
 \item[(a)] $$
\lim_{b \to \infty} \sup_{n\geq 1}
\mathbf{P}\left(\|\xi_{n}\|_{p} > b\right) = 0;
$$
\item[(b)] for each $\varepsilon >0$,
$$
\lim_{J \to \infty} \limsup_{n\to \infty}
\mathbf{P}\left(\sup_{j \ge J}
2^{\,j(\alpha - 1/p)}\left(\sum_{r \in D_{j}}
\bigl|\lambda_{r}(\xi_{n})\bigr|^{p}\right)^{1/p} > \varepsilon \right)= 0.
$$
\end{itemize}
\end{theorem}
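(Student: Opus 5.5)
The plan is to prove this as a specialization of the general principle behind Suquet's criterion: in a Banach space with a Schauder basis, a family of random elements is tight if and only if its finite-dimensional projections are tight and the tails of the expansion are uniformly small in probability. Throughout I work with the sequential norm $\|\cdot\|^{\mathrm{seq}}_{p,\alpha}$, which is equivalent to the Besov norm by the theorem of Ciesielski et al. For $J\ge 1$, let $E_J x:=\sum_{j<J}\sum_{r\in D_j}\lambda_r(x)\Lambda_r$ be the partial sum operator. Since the coefficients of $x-E_Jx$ are those of $x$ at levels $j\ge J$ and zero otherwise, we have exactly
\[
\|x-E_Jx\|^{\mathrm{seq}}_{p,\alpha}=\sup_{j\ge J}2^{j(\alpha-1/p)}\Bigl(\sum_{r\in D_j}|\lambda_r(x)|^p\Bigr)^{1/p}.
\]
Hence condition (b) says precisely that $\lim_J\limsup_n\mathbf P(\|\xi_n-E_J\xi_n\|>\varepsilon)=0$.

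\emph{Sufficiency.} The first step is to show that $(E_J\xi_n)_n$ is tight for each fixed $J$. It lives in the finite-dimensional space spanned by $\Lambda_r$, $r\in D_j$, $j<J$, where all norms are equivalent, so it suffices to bound $\|E_J\xi_n\|_p$ in probability. Write $\|E_J\xi_n\|_p\le\|\xi_n\|_p+\|\xi_n-E_J\xi_n\|_p$. The first term is handled by (a). For the second, the disjointness of the supports of the $\Lambda_r$ within a level and the bound $\|\Lambda_r\|_p^p\le 2^{-j}\cdot 2$ give
\[
\Bigl\|\sum_{r\in D_j}\lambda_r\Lambda_r\Bigr\|_p\le C\,2^{-j/p}\Bigl(\sum_{r\in D_j}|\lambda_r|^p\Bigr)^{1/p}.
\]
Summing over $j\ge J$ then yields
\[
\|x-E_Jx\|_p\le C\sum_{j\ge J}2^{-j\alpha}\,\|x-E_Jx\|^{\mathrm{seq}}_{p,\alpha}\le C'\|x-E_Jx\|^{\mathrm{seq}}_{p,\alpha},
\]
using $\alpha>0$. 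By (b), the right-hand side is bounded in probability for $J$ large, uniformly in $n$ beyond some $n_0$. The finitely many remaining $n$ are tight individually, since each $\xi_n$ is a random element of a separable Banach space; this same observation lets me replace $\limsup_n$ by $\sup_n$ in (b) once $J$ depends on $\varepsilon$.

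The second step concludes. Given $\eta>0$, I choose for each $k$ a level $J_k$ such that $\mathbf P(\|\xi_n-E_{J_k}\xi_n\|>1/k)<\eta 2^{-k}$ for all $n$, and a compact set $K_k$ in the range of $E_{J_k}$ carrying $E_{J_k}\xi_n$ with probability at least $1-\eta2^{-k}$. The set $\bigcap_k\{x:\operatorname{dist}(x,K_k)\le 1/k\}$ is closed and totally bounded, hence compact, and it has probability at least $1-2\eta$ under every $\xi_n$.

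\emph{Necessity.} Given a compact $K$ with $\mathbf P(\xi_n\in K)\ge1-\eta$ for all $n$, condition (a) follows because the embedding into $\mathbb L_p$ is continuous, so $\|\cdot\|_p$ is bounded on $K$. For (b), note that the operators $I-E_J$ are uniformly bounded (by the basis constant) and converge pointwise to $0$. By equicontinuity they therefore converge to $0$ uniformly on the compact set $K$, so $\sup_{x\in K}\|x-E_Jx\|^{\mathrm{seq}}_{p,\alpha}<\varepsilon$ for all $J$ large, which gives (b).

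The main obstacle is the first step of the sufficiency part: controlling the low-level coefficients using only the $\mathbb L_p$ bound (a). Point evaluations are not continuous on $\mathbb L_p$, so this control has to pass through the tail estimate above. That estimate relies on the geometric factor $2^{-j\alpha}$ to make the level sums converge.
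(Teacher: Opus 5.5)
Your proof is correct and takes the route the paper indicates. The paper gives no argument of its own: it cites Giraudo--Ra\v{c}kauskas and notes that the criterion is a corollary of Suquet's tightness criterion for Schauder-decomposable spaces. You make this self-contained by reproving Suquet's criterion for the level-wise Faber--Schauder decomposition, and the tail estimate $\|x-E_Jx\|_p\le C'\|x-E_Jx\|^{\mathrm{seq}}_{p,\alpha}$ is exactly what turns (a) into tightness of the projections $E_J\xi_n$.

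One phrase should be tightened: ``bounded in probability for $J$ large.'' What (a) and (b) give directly, and all your compactness step uses, is that for each chosen $J_k$ the probability that $\|E_{J_k}\xi_n\|_p$ exceeds some level $M_k$ is at most $2\eta 2^{-k}$, uniformly in $n$. Tightness for every fixed $J$ then follows from $E_J=E_J\circ E_{J'}$ with $J'\ge J$ and $E_J$ continuous.
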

\section{Proofs}
\begin{proof}[Proof of Theorem~{\upshape\ref{thm:BesovThm1}}]
Fix $0\le t_{1}<\dots<t_{d}\le 1$. For each $t\in[0,1]$, let
$$
u_{n}(t):=\max\{k\in\{0,\dots,m_n\}:\ a_{n,k}\le t\},
$$
so that $a_{n,u_n(t)}\le t < a_{n,u_n(t)+1}$.
By construction of the polygonal process \eqref{eq2},
\begin{equation}\label{eq10}
S_{n}(t)=S_{n,u_n(t)}+\frac{t-a_{n,u_n(t)}}{\sigma_{n,u_n(t)+1}^2}\,X_{n,u_n(t)+1}.
\end{equation}
By the Lindeberg condition \eqref{eq4}, the Lindeberg--Feller multivariate CLT yields
$$
(S_{n}(t_{1}),\dots, S_{n}(t_d)) \xrightarrow{\mathcal D}(W(t_{1}),\dots, W(t_{d})), \ \ n\to \infty.
$$
Therefore, the finite--dimensional distributions of $\mathbb{S}_{n}$ converge to those of $\mathbb{W}$.

It remains to establish tightness of $\mathbb{S}_{n}$.
To this end, we shall use Theorem~\ref{thm:tightness}. More precisely, we need 
to verify that
\begin{equation}\label{eq11}
\lim_{b\to\infty}\sup_{n\ge1}\mathbf{P}\left(\|\mathbb{S}_{n}\|_{p}> b\right)=0;
\end{equation}
for every $\varepsilon>0$,
 \begin{equation}\label{eq12}
    \lim_{J\to\infty}\limsup_{n\to\infty}
\mathbf{P}\left(
\sup_{j\ge J}2^{j(\alpha-1/p)}\Big(\sum_{r\in D_j}|\lambda_{r}(S_n)|^{p}\Big)^{1/p}>\varepsilon
\right)=0,
 \end{equation}
where $(D_{j})_{j\ge0}$ are dyadic points, and $\lambda_{r}$ denotes the Faber--Schauder
increment at $r$.
In order to establish \eqref{eq11}, we note that due to Prokhorov invariance principle
we claim that under \eqref{eq4} and $\sum_{k=1}^{m_n}\sigma_{n,k}^{2}=1$, the polygonal
partial sum process $\mathbb{S}_{n}$ is tight in $C[0,1]$. In particular, 
$\|\mathbb{S}_n\|_{\infty}$ is tight.
Since $\|x\|_{p}\le \|x\|_{\infty}$ for every $x\in C[0,1]$ and $p\ge1$, we obtain
$$
\sup_{n \ge 1}\mathbf{P}(\|\mathbb{S}_{n}\|_{p}>b)
\le \sup_{n\ge1}\mathbf{P}(\|\mathbb{S}_{n}\|_{\infty} > b) \to 0, \ \ b \to \infty
$$
which proves \eqref{eq11}.

We now prove that condition \eqref{eq12} is satisfied. For this aim, for $r\in D_{j}$, we denote $r^{-}:=r-2^{-j}$ and $r^{+}:=r+2^{-j}$, and set
$$
\Delta_{r}:=S_{n}(r^{+})-S_{n}(r),\qquad \Delta_{r}^{-}:=S_n(r)-S_n(r^{-}).
$$
Since $2\lambda_r(S_{n})=\Delta_{r}^{-}-\Delta_{r}$, we have
$$
|\lambda_r(S_{n})|\le \frac{1}{2}|\Delta_{r}|+\frac{1}{2}|\Delta_{r}^{-}|.
$$
By symmetry, it is enough to prove \eqref{eq12} with
$|\lambda_r(S_{n})|$ replaced by $|\Delta_{r}|$. So, the proof of \eqref{eq12} reduces to proving the relation that for every
$\varepsilon>0$,
\begin{equation*} 
\lim_{J\to\infty}\limsup_{n\to\infty}
\mathbf P\left(
\sup_{j\ge J}2^{j\beta}\left(\sum_{r\in D_{j}}|\Delta_{r}|^{p}\right)^{1/p} > \varepsilon
\right)=0.
\end{equation*}
Define
$$
Y_{n}:=\max_{1\le k\le m_{n}}\sigma_{n,k}^{-2(\alpha-1/p)}|X_{n,k}|.
$$
Fix $j\ge 0$ and $r\in D_{j}$. We have to estimate the term $\Delta_{r}$. Writing $u_n(\cdot)$ for the bracket index as in
\eqref{eq10}, we distinguish three cases separately.

\smallskip
\noindent\emph{Case 1: $u_{n}(r^{+})=u_n(r)$.}
In this case, the points $r,r^{+}$ belong to the same interval
$[a_{n,u(r)},a_{n,u(r)+1}]$, and by \eqref{eq10},
$$
\Delta_{r}=\frac{r^{+}-r}{\sigma_{n,u(r)+1}^2}\,X_{n,u(r)+1}.
$$
From $r^{+}-r=2^{-j}$ and $u(r^{+})=u(r)$, it follows that
$\sigma_{n,u(r)+1}^{2} \ge 2^{-j}$.
Hence, by denoting $\gamma:=\frac{2^{-j}}{\sigma_{n,u(r)+1}^2}$ one has that $\gamma\in[0,1]$ and, since $\alpha-1/p \in (0, 1/2)$, one has
$\gamma\le \gamma^{\alpha-1/p}$. Therefore,
\begin{equation}\label{eq14}
\left|\Delta_{r}\right|
\le 2^{-(\alpha-1/p)}\sigma_{n,u(r)+1}^{-2(\alpha-1/p)}|X_{n,u(r)+1}|
\le 2^{-j(\alpha-1/p)}Y_{n}.
\end{equation}
\smallskip
\noindent\textit{Case 2: $u(r^{+})=u(r)+1$.}
Then
$$
a_{n,u(r)}\le r < a_{n,u(r)+1)}\le r^{+}<a_{n,u(r)+2}.
$$
Plugging the points $r$ and $r^{+}$ into \eqref{eq10} and taking into account the identity
$S_{n}(u(r)+1)=S_{n}(u(r))+X_{n,u(r)+1}$, we get
\begin{equation}\label{eq15}
\Delta_{r}
=\left(1-\frac{r-a_{n,u(r)}}{\sigma_{n,u(r)+1}^2}\right)X_{n,u(r)+1}
+\frac{r^{+}-a_{n,u(r)+1}}{\sigma_{n,u(r)+2}^2}\, X_{n,u(r)+2}.
\end{equation}
Note that both coefficients of above equality belong to $[0,1]$. Moreover,
\begin{multline*}
 1-\frac{r-a_{n,u(r)}}{\sigma_{n,u(r)+1}^2}
=\frac{a_{n,u(r)+1}-r}{\sigma_{n,u(r)+1}^2}
\le \frac{2^{-j}}{\sigma_{n,u(r)+1}^2}\\
\le \Bigl(\frac{2^{-j}}{\sigma_{n,u(r)+1}^2}\Bigr)^{\!\alpha-1/p}
=2^{-j(\alpha-1/p)}\sigma_{n,u(r)+1}^{-2(\alpha-1/p)},
\end{multline*}
and similarly,
$$
\frac{r^{+}-a_{n,u(r)+1}}{\sigma_{n,u(r)+2}^2}
\le \frac{2^{-j}}{\sigma_{n,u(r)+2}^2}
\le 2^{-j(\alpha-1/p)}\sigma_{n,u(r)+2}^{-2(\alpha-1/p)}.
$$
Plugging these bounds into \eqref{eq15}, we arrive at
\begin{align}
\left|\Delta_{r}\right|
& \le 2^{-j(\alpha-1/p)}\Bigl(
\sigma_{n,u(r)+1}^{-2(\alpha-1/p)}|X_{n,u(r)+1}|
+\sigma_{n,u(r)+2}^{-2(\alpha-1/p)}|X_{n,u(r)+2}|
\Bigr) \nonumber \\
& \le 2^{1-j(\alpha-1/p)}Y_{n}.\label{eq16}
\end{align}
\smallskip
\noindent\textit{Case 3: $u(r^+)\ge u(r)+2$.}
Then between points $r$ and $r^{+}$ there is at least one interval.
A telescoping argument based on \eqref{eq10} yields
\begin{equation}\label{eq17}
\Delta_{r}=
\frac{a_{n,u(r)+1}-r}{\sigma_{n,u(r)+1}^{2}}X_{n,u(r)+1}
+\sum_{k=u(r)+2}^{u(r^+)}X_{n,k}
+\frac{r^{+}-a_{n,u(r^{+})}}{\sigma_{n,u(r^{+})+1}^2}X_{n,u(r^{+})+1}.
\end{equation}
The two boundary terms in \eqref{eq17} are controlled as in Case 2, hence taking into notice relations \eqref{eq14} and \eqref{eq16}, we overall, conclude that
\begin{equation}\label{eq18}
\left|\Delta_{r}\right|
\le \Bigl|\sum_{k=u(r)+2}^{u(r^{+})}X_{n,k}\Bigr|+2^{1-j(\alpha-1/p)}Y_{n}.
\end{equation}
Furthermore, it follows from $r^{+}-r=2^{-j}$ that,
\begin{equation}\label{eq19}
\sum_{k=u(r)+2}^{u(r^{+})}\sigma_{n,k}^{2}
\le a_{n,u(r^+)}-a_{n,u(r)+1}\le r^{+}-r=2^{-j}.
\end{equation}
From \eqref{eq18} and Minkowski's inequality,
$$
\left(\sum_{r\in D_{j}}\left|\Delta_{r}\right|^{p}\right)^{1/p}
\le
\left(\sum_{r\in D_j}\Big|\sum_{k=u_n(r)+2}^{u_n(r^{+})}X_{n,k}\Big|^{p}\right)^{1/p}
+2^{1-j(\alpha-1/p)}|D_{j}|^{1/p}Y_{n}.
$$
Taking into notice that $\left|D_{j}\right| \le 2^{j}$, we get
$$
\sup_{j\ge J}2^{j\beta}\Big(\sum_{r\in D_{j}}|\Delta_{r}|^{p}\Big)^{1/p}
\le \sup_{j\ge J}2^{j(\alpha-1/p)}
\Big(\sum_{r\in D_{j}}\Big|\sum_{k=u_{n}(r)+2}^{u_n(r^{+})}X_{n,k}\Big|^{p}\Big)^{1/p}$$
$$
+ \sup_{j\ge J}2\cdot 2^{j/p}Y_{n}
=: A_{J,n}+B_{J,n}$$
Hence, for each $\varepsilon > 0$,
\begin{equation}\label{eq20}
\mathbf{P}\left(
\sup_{j\ge J}2^{j(\alpha-1/p)}\Big(\sum_{r\in D_j}|\Delta_{r}|^{p}\Big)^{1/p}> \varepsilon
\right)
\le P_{1}(J,n,\varepsilon)+P_{2}(J,n,\varepsilon),
\end{equation}
where
$$
P_{1}(J,n,\varepsilon):=\mathbf{P}(A_{J,n} > \varepsilon/2),
\qquad
P_{2}(J,n,\varepsilon):=\mathbf{P}(B_{J,n} > \varepsilon/2).
$$
It remains to prove that for each fixed $\varepsilon >0$,
\begin{equation}\label{eq21}
    \lim_{J\to \infty}\limsup_{n\to\infty}P_{1}(J,n,\varepsilon)=0,
\end{equation}
\begin{equation}\label{eq22}
   \lim_{J\to \infty}\limsup_{n\to\infty}P_{2}(J,n,\varepsilon)=0.
\end{equation}
First, we prove \eqref{eq22}. It is clear that
$$
P_2(J,n,\varepsilon) \leq \mathbf{P} \left(Y_{n}>\frac{\varepsilon}{4}\,2^{-J/p}\right)
\le \sum_{k=1}^{m_{n}}\mathbf P\big(|X_{n,k}|>\frac{\varepsilon}{4}2^{-J/p} \sigma_{n,k}^{2(\alpha-1/p)}\big).
$$
In view of condition \eqref{eq5}, the right-hand side of the above inequality vanishes as $n\to \infty$. This proves \eqref{eq22}.

We now prove that \eqref{eq21} holds. For this, we define the truncated random variables, that is, for each $\tau\in(0,1)$,
$$
X_{n,k,\tau}:=X_{n,k}\mathbf{1}{\{|X_{n,k}|\le \tau\sigma_{n,k}^{2(\alpha-1/p)}\}},
\qquad
\widetilde X_{n,k,\tau}:=X_{n,k,\tau}-\mathbf{E} X_{n,k,\tau}.
$$
Let us consider event $A_{n,\tau}:=\{Y_n\le \tau\}=\{\max_{1\leq k\leq m_n} |X_{n,k}|\le \tau\sigma_{n,k}^{2(\alpha-1/p)}\}$.
By condition of Theorem 1 with $\varepsilon=\tau$,
$$
\mathbf{P}(A_{n,\tau}^{c})=\mathbf{P}(Y_{n} >\tau)
\le \sum_{k=1}^{m_n}\mathbf{P}\left(\left|X_{n,k}\right|>\tau\sigma_{n,k}^{2(\alpha-1/p)}\right)\to 0, \ \ n\to \infty.
$$
It is easy to notice that on the events $A_{n,\tau}$, one has $X_{n,k}=X_{n,k,\tau}$ for all $k \geq 1$ and hence,
$$
P_{1}(J,n,\varepsilon)\le P_{1,\tau}(J,n,\varepsilon)+\mathbf{P}(A_{n,\tau}^{c}),
$$
where $P_{1,\tau}(J,n,\varepsilon)$ is defined as $P_1(J,n,\varepsilon)$ but 
with $X_{n,k}$ replaced by $X_{n,k,\tau}$.\\
Therefore, the proof of \eqref{eq22} reduces to prove that 
\begin{equation}\label{eq23}
\lim_{\tau \to 0}\lim_{J\to\infty}\limsup_{n\to\infty} P_{1,\tau}(J,n,\varepsilon)=0.
\end{equation}
To this aim, we represent $X_{n,k,\tau}$ as $X_{n,k,\tau}=\mathbf{E}X_{n,k,\tau}+\widetilde X_{n,k,\tau}$.
Since $X_{n,k}$ are centered, we have
$$
\mathbf{E} X_{n,k,\tau}= -\mathbf{E}\big(X_{n,k}\mathbf{1}{\{|X_{n,k}|>\tau\sigma_{n,k}^{2(\alpha-1/p)}\}}\big),
$$
and by the Cauchy--Schwarz inequality, one has
\begin{equation*}
 \left|\mathbf{E} X_{n,k,\tau}\right|
\le 
\sigma_{n,k}\,\left(\mathbf{P}\left(|X_{n,k}|>\tau\sigma_{n,k}^{2(\alpha-1/p)}
\right)\right)^{1/2}.
\end{equation*}
Applying the Cauchy--Schwarz inequality once again, we infer from \eqref{eq19} that for each block $I_{r}:=\{u_n(r)+2,\dots,u_{n}(r^{+})\}$,
$$
\Big|\sum_{k\in I_{r}}\mathbf{E} X_{n,k,\tau}\Big|
\le 2^{-j/2}\Big(\sum_{k=1}^{m_n}\mathbf P(|X_{n,k}|>\tau\sigma_{n,k}^{2(\alpha-1/p)})\Big)^{1/2}.
$$
Multiplying the right-hand side of the above inequality by $2^{j(\alpha-1/p)}$
yields the factor $2^{-j(1/2-(\alpha-1/p))}$, which is summable over $j \ge J$
since $1/2-(\alpha-1/p)>0$; moreover, the second factor tends to $0$ by virtue
of condition~\eqref{eq5}.
Thus, the contribution of the means is negligible in \eqref{eq23},
and we may deal with centered blocks based on $\widetilde X_{n,k,\tau}$ as 
follows. Let $q$ be such that $q(p,\alpha)<q<p$. Then
\begin{multline*}
P_{1,\tau}(J,n,\varepsilon)\\ 
\leq 
\sum_{j\geq J}\mathbf P
\left(2^{j(\alpha-1/p)}\left(\sum_{r\in D_j}
\left\lvert \sum_{k\in I_r}\widetilde X_{n,k,\tau} \right\rvert^p 
\right)^{1/p}>\varepsilon/4\right)\quad \mbox{(by a union bound)}  \\
 \leq \frac{4^q}{\varepsilon^q}
\sum_{j\geq J}2^{jq(\alpha-1/p)}
\mathbf{E}\left[ \left(\sum_{r\in D_j}
\left\lvert \sum_{k\in I_r}\widetilde X_{n,k,\tau} \right\rvert^p \right)^{q/p} 
\right]\quad \mbox{(by Markov's inequality)}\\
 \leq  \frac{4^q}{\varepsilon^q}
\sum_{j\geq J}2^{jq(\alpha-1/p)}
\sum_{r\in D_j}\mathbf{E} 
\left\lvert \sum_{k\in I_r}\widetilde X_{n,k,\tau} \right\rvert^q   \quad 
\mbox{(by concavity of the map } t\mapsto t^{q/p})  \\
 \leq C_q
\frac{4^q}{\varepsilon^q}
\sum_{j\geq J}2^{jq(\alpha-1/p)}
\sum_{r\in D_j} \left( \sum_{k\in I_r}\mathbf{E} \widetilde 
X_{n,k,\tau}^2 \right)^{q/2}\nonumber\\
+ C_q
\frac{4^q}{\varepsilon^q}
\sum_{j\geq J}2^{jq(\alpha-1/p)}\sum_{k\in I_r}
\mathbf{E} 
\left\lvert  \widetilde X_{n,k,\tau} \right\rvert^q   \quad \mbox{(by 
Rosenthal's inequality \cite{Rosenthal70})},
\end{multline*}
where $C_q$ depends only on $q$. After a use of the elementary 
inequalities 
$\mathbf{E}\widetilde X_{n,k,\tau}^{2}\le \sigma_{n,k}^{2}$, 
$\mathbf{E}|\widetilde X_{n,k,\tau}|^{q}\le 
2^{q-1}\mathbf{E}|X_{n,k,\tau}|^{q}$, we derive that 
\begin{multline}\label{eq:estimation_P_with_X_tilde}
 P_{1,\tau}(J,n,\varepsilon)\leq C_q
\frac{4^q}{\varepsilon^q}
\sum_{j\geq J}2^{jq(\alpha-1/p)}
\sum_{r\in D_j} \left( \sum_{k\in I_r}\mathbf{E} 
X_{n,k}^2 \right)^{q/2} \\
 + C_q
\frac{2^{3q-1}}{\varepsilon^q}
\sum_{j\geq J}2^{jq(\alpha-1/p)}\sum_{r\in D_j}\sum_{k\in I_r}
\mathbf{E} 
\left\lvert   X_{n,k,\tau} \right\rvert^q  
 .
\end{multline}
Using $$ \sum_{k\in I_r}\mathbf{E}X_{n,k}^2 =\sum_{k=u_n(r)+2}^{u_n(r^+)}
\sigma_{n,k}^2\leq 2^{-j}$$ (see the first line page 430 of \cite{RS03}) and 
$\operatorname{Card}(D_j)\leq 2^j$ for the first term in 
\eqref{eq:estimation_P_with_X_tilde} and switching the sums over $j$ and $k$ for the second gives 
\begin{multline}\label{eq:estimation_P_with_X_tilde2}
P_{1,\tau}(J,n,\varepsilon)\leq 
 C_q
\frac{4^q}{\varepsilon^q}
\sum_{j\geq J}2^{j\left(q(\alpha-1/p-1/2)+1\right)}
\\
+C_q\frac{2^{3q-1}}{\varepsilon^q}
\sum_{k=1}^{m_n}
\sum_{j=1}^{\infty}2^{jq(\alpha-1/p)}\sum_{r\in D_j}\mathbf{1}
\{u_n(r)+2\leq 
k\leq u_n(r^+)\}
\mathbf{E} 
\left\lvert   X_{n,k,\tau} \right\rvert^q   .
\end{multline}
Using that 
$$
\mathbf{1}\{u_n(r)+2\leq k\leq u_n(r^+)\}
\leq \mathbf{1}\{\sigma_{n,k}^2\leq 2^{-j}\}
$$
(see the displayed equation before (31) in \cite{RS03}), the bound \eqref{eq:estimation_P_with_X_tilde2} becomes 
\begin{multline}\label{eq:estimation_P_with_X_tilde3}
P_{1,\tau}(J,n,\varepsilon)\leq 
 C_q
\frac{4^q}{\varepsilon^q}
\sum_{j\geq J}2^{j\left(q(\alpha-1/p-1/2)+1\right)}
\\
+C_q\frac{2^{3q-1}}{\varepsilon^q}
\sum_{k=1}^{m_n}
\sum_{j=1}^{\infty}2^{jq(\alpha-1/p)}\mathbf{1}\{\sigma_{n,k}^2\leq 2^{-j}\}
\mathbf{E} 
\left\lvert   X_{n,k,\tau} \right\rvert^q .
\end{multline}
The inner sum over $j$ can be estimated as  
\begin{align*}
\sum_{j=1}^{\infty}2^{jq(\alpha-1/p)}\mathbf{1}\{\sigma_{n,k}^2\leq 2^{-j}\}
&=\sum_{j=1}^{\log_2\left(\sigma_{n,k}^{-2}\right)}2^{jq(\alpha-1/p)}\\
&\leq 
\frac{2^{q(\alpha-1/p)\log_2\left(\sigma_{n,k}^{-2}\right)}}{2^{q(\alpha-1/p)}-1
}\\
&=\frac{ \sigma_{n,k}^{-2q(\alpha-1/p)}}{2^{q(\alpha-1/p)}-1}.
\end{align*}
As a consequence, the bound \eqref{eq:estimation_P_with_X_tilde3} becomes
\begin{multline*}
P_{1,\tau}(J,n,\varepsilon)\leq 
 C_q
\frac{4^q}{\varepsilon^q}
\sum_{j\geq J}2^{j\left(1-q/q(p,\alpha)\right)}\\
+\frac{2^{3q-1}C_q}{\varepsilon^q(2^{q(\alpha-1/p)}-1)}
\sum_{k=1}^{m_n}
 \sigma_{n,k}^{-2q(\alpha-1/p)}\mathbf{E}\left[\lvert 
X_{n,k}\rvert^q\mathbf{1}\{\lvert X_{n,k}\rvert\leq 
\tau\sigma_{n,k}^{2(\alpha-1/p} \} \right].
 \end{multline*}
Using that $q>q(p,\alpha)$ and \eqref{eq6}
we conclude that \eqref{eq23} holds hence \eqref{eq21}.

Combining relations \eqref{eq20}, \eqref{eq21} with \eqref{eq22} we infer in 
view of \eqref{eq12} that $\mathbb{S}_{n}$ is tight in 
$B^{o}_{p,\alpha}([0,1])$.
This ends the proof of Theorem~\ref{thm:BesovThm1}.
\end{proof}
\begin{proof}[Proof of Theorem~\ref{thm:WIP_p_alpha_leq_1}]
 As in \cite{GR17}, using the continuous embeddings 
$B_{2,\alpha}^o([0,1])\hookrightarrow B_{p,\alpha}^o([0,1])$ if $1\leq p\leq 2$ 
and 
$0\leq\alpha<1/2$, it suffices to prove the following cases 
\begin{itemize}
 \item $p=2$ and 
$0\leq \alpha<1/2$;
\item $p>2$ and $0\leq\alpha\leq 1/p$.
\end{itemize}
The convergence of the finite dimensional distributions is guaranted by 
Lindeberg condition \eqref{eq4}. It thus suffices to show tightness, which 
translates as 
\begin{equation}\label{eq:tightness_p_alpha_leq_1}
 \forall \varepsilon>0,\quad \lim_{\delta\to 0}
 \limsup_{n\to\infty}\mathbf P\left(\frac 1{\delta^{p\alpha}}
 \sup_{h:\abs{h}\leq \delta} \int_{I_h}
\abs{S_n(t+h)-S_n(t)}^p\mathrm{d}t >\varepsilon 
  \right)=0,
\end{equation}
where $I_h=[0,1]\cap [-h, 1,1-h]$. The idea is the following. First consider the 
case where $p=2$ (and $0<\alpha<1/2$). We bound 
the supremum involved in \eqref{eq:tightness_p_alpha_leq_1} by a function of 
the squares of increments of the partial sums of the $X_{n,i}$. Bounding them further by maxima of increments and using Doob's inequality gives \eqref{eq:tightness_p_alpha_leq_1}. 

In the second case, that is, $p>2$, the previous argument does not work out directly. Indeed, we cannot use Markov's inequality, since we do not assume that the random variables $X_{n,i}$ have a finite moment of order $p$. To overcome this problem, we 
will split the probability in \eqref{eq:tightness_p_alpha_leq_1} according to 
the cases where $ \sup_{h:\abs{h}\leq \delta} \sup_{t\in 
I_h}\abs{S_n(t+h)-S_n(t)}^{p-2}$ is bigger or not than a fixed value. The part 
where this random variable is bigger than the fixed number is handled by 
tightness in the space of continuous functions. On the part where it is smaller, 
 the $\mathbb L_p$-norm involved in \eqref{eq:tightness_p_alpha_leq_1} reduces 
to an $\mathbb L_2$-norm, for which the argument in the case $p=2$ applies.

Let us start by the case $p=2$. Define the intervals  
$J_{n,k}:=[a_{n,k-1},a_{n,k})$, $1\leq k\leq m_n$. 
For a fixed $\delta\in(0,1)$, $h$ such that $\abs{h}\leq \delta$, one has 
\begin{equation*}
\int_{I_h} \left(S_n(t+h)-S_n(t) \right)^2\mathrm{d}t\leq 
\sum_{k,\ell=1}^{m_n}\int_{I_h}\mathbf{1}\{t\in J_{n,k}\}
\mathbf{1}\{t+h\in J_{n,\ell}\}
 \left(S_n(t+h)-S_n(t) \right)^2\mathrm{d}t.
\end{equation*}
Using twice relation \eqref{eq2}, first with $k$ and $t$, then with $\ell$ and $t+h$
and the inequalities 
$\abs{S_n(t+h)-S_n(t)}\leq \abs{S_n(t+h)-S_n(a_{n,\ell})}
+ \abs{ S_n(a_{n,\ell})-S_n(a_{n,k})}+\abs{S_n(a_{n,k}) -S_n(t)}$, 
$(a+b+c)^2\leq 3a^2+3b^2+3c^2$ valid for all real numbers $a,b$ and $c$, one 
gets 
\begin{align*}
\int_{I_h} \left(S_n(t+h)-S_n(t) \right)^2\mathrm{d}t&\leq 
3\sum_{k,\ell=1}^{m_n}\int_{I_h}\mathbf{1}\{t\in J_{n,k}\}
\mathbf{1}\{t+h\in 
J_{n,\ell}\}\left(\frac{t+h-a_{n,\ell}}{\sigma_{n,\ell}^2}\right)^2  
 X_{n,\ell}^2\mathrm{d}t 
 \\
&+3\sum_{k,\ell=1}^{m_n}\int_{I_h}\mathbf{1}\{t\in J_{n,k}\}
\mathbf{1}\{t+h\in 
J_{n,\ell}\}\left(\sum_{i=\min\{k,\ell\}+1}^{\max\{k,\ell\}}X_{n,i}
 \right)^2\mathrm{d}t   \\
&+3\sum_{k,\ell=1}^{m_n}\int_{I_h}\mathbf{1}\{t\in J_{n,k}\}
\mathbf{1}\{t+h\in J_{n,\ell}\}\left(\frac{t-a_{n,k}}{\sigma_{n,k}^2}\right)^2 
X_{n,k}^2\mathrm{d}t.
\end{align*}
For the first term (respectively the third), the sum over $k$ (respectively $\ell$) does not play any role since we sum indicators of disjoint sets whose union is contained in $[0,1]$.
Let 
\begin{equation*}
E_{n,k,\delta}:=\{\ell\in\{1,\dots,m_n\}, \abs{a_{n,\ell}-a_{n,k}}\leq \delta\}.
\end{equation*}
 Noticing that  $\mathbf{1}\{t\in J_{n,k}\}
\mathbf{1}\{t+h\in J_{n,\ell}\}=0$ if $\ell\notin E_{n,k,\delta}$,
writing  $E_{n,k,\delta}$ as 
$\{i\in\{1,\dots,m_n\}, j_{n,k,\delta}\leq i\leq j'_{n,k,\delta}\}$,
and using
\begin{equation*}
 \left(\sum_{i=\min\{k,\ell\}+1}^{\max\{k,\ell\}}X_{n,i}
 \right)^2\leq \max_{j\in 
E_{n,k,\delta}}\left(\sum_{i=\min\{k,j\}+1}^{\max\{k,j\}}X_{n,i}
 \right)^2\leq 4\max_{1\leq j\leq j'_{n,k,\delta}}\left(
 \sum_{i=j_{n,k,\delta}+1}^jX_{n,i}
 \right)^2,
\end{equation*}
one gets 
\begin{multline*}
\int_{I_h} \left(S_n(t+h)-S_n(t) \right)^2\mathrm{d}t\leq 
2 \sum_{k=1}^{m_n} \sigma_{n,k}^2X_{n,k}^2\\
+8\sum_{k,\ell=1}^{m_n} \lambda(J_{n,k}\cap (J_{n,\ell}-h))
\max_{1\leq j\leq j'_{n,k,\delta}}\left(
 \sum_{i=j_{n,k,\delta}+1}^jX_{n,i}
 \right)^2,
\end{multline*}
where $\lambda$ denotes Lebesgue measure and $J_{n,\ell}-h=\{u-h,u\in 
J_{n,\ell}\}$. 
Since the sets $J_{n,k}\cap (J_{n,\ell}-h)$, $1\leq \ell\leq m_n$, are pairwise 
disjoint and their union has a Lebesgue measure smaller than $\delta$, we 
finally get the bound 
\begin{multline}\label{eq:bound_supremum_p_alpha_leq_1}
\sup_{h:\abs{h}\leq \delta}\int_{I_h} \left(S_n(t+h)-S_n(t) \right)^2\mathrm{d}t
\\
\leq 2 \sum_{k=1}^{m_n} \sigma_{n,k}^2X_{n,k}^2
+8\sum_{k=1}^{m_n}\sigma_{n,k}^2
\max_{1\leq j\leq j'_{n,k,\delta}}\left(
 \sum_{i=j_{n,k,\delta}+1}^jX_{n,i}
 \right)^2.
\end{multline}
In order to conclude the case $p=2$, it suffices to prove that 
\begin{center}
\begin{equation}\label{eq:conv_proba_powers} 
 \sum_{k=1}^{m_n} \sigma_{n,k}^2X_{n,k}^2\to 0\mbox{ in 
probability}
\end{equation}
\begin{equation}
 \lim_{\delta\to 0}\limsup_{n\to\infty}
\mathbf{P}\left(\frac 1{\delta^{2\alpha}}\sum_{k=1}^{m_n}\sigma_{n,k}^2
\max_{1\leq j\leq j'_{n,k,\delta}}\left(
 \sum_{i=j_{n,k,\delta}+1}^jX_{n,i}
 \right)^2>\varepsilon \right)=0\label{eq:conv_increments}.
\end{equation}
\end{center}
For \eqref{eq:conv_proba_powers}, we use Markov's inequality and the fact that 
$\max_{1\leq k\leq m_n}\sigma_{n,k}^2\to 0$ as $n\to \infty$. For 
\eqref{eq:conv_increments}, splitting the max according to $j\leq k$ or not, 
using Markov's and Doob's inequality,  one gets
\begin{align*}
\mathbf{E}
\left(\frac 1{\delta^{2\alpha}}\sum_{k=1}^{m_n}\sigma_{n,k}^2
\max_{1\leq j\leq j'_{n,k,\delta}}\left(
 \sum_{i=j_{n,k,\delta}+1}^jX_{n,i}
 \right)^2  \right)&\leq 2{\delta^{-2\alpha}}\sum_{k=1}^{m_n}\sigma_{n,k}^2
  \sum_{i= j_{n,k,\delta}+1}^{ j'_{n,k,\delta}}\sigma_{n,i}^2\\
  &\leq 2\delta^{1-2\alpha}\sum_{k=1}^{m_n}\sigma_{n,k}^2= 2\delta^{1-2\alpha}
\end{align*}
and we conclude using $\alpha<1/2$. 

Let us now treat the case $p>2$. Define the event
\begin{equation*}
A\left(\delta,n,\tau\right):=\left\{ \sup_{h:\abs{h}\leq \delta} \sup_{t\in I_h}\abs{S_n(t+h)-S_n(t)}^{p-2} >\tau \right\}.
\end{equation*}
Since the process $\mathbb{S}_n$ is tight in the space of 
continuous functions on $[0,1]$ endowed with the uniform norm, we have
\begin{equation*}
 \forall \tau>0, \quad \lim_{\delta\to 0}\limsup_{n\to \infty}\pr 
\left(A\left(\delta,n,\tau\right)\right)=0 .
\end{equation*}
Therefore, it suffices to show that 
\begin{equation*}
 \forall \varepsilon>0,\quad   \lim_{\tau\to 0}\lim_{\delta\to 0}
 \limsup_{n\to\infty}\mathbf P\left(A\left(\delta,n,\tau\right)^c\cap B(n,\delta,\varepsilon)
  \right)=0,
\end{equation*}
where
\begin{equation*}
B(n,\delta,\varepsilon)=\left\{ \frac 1{\delta^{p\alpha}}
 \sup_{h:\abs{h}\leq \delta} \int_{I_h}
\abs{S_n(t+h)-S_n(t)}^p\mathrm{d}t >\varepsilon \right\}.
\end{equation*}
Using $\int_{I_h} \lvert g(t)\rvert^p\mathrm{d}t\leqslant \sup_{s\in I_h}
\lvert g(s)\rvert^{p-2}\int_{I_h} \lvert g(t)\rvert^2\mathrm{d}t$ 
followed by \eqref{eq:bound_supremum_p_alpha_leq_1}, one has 
\begin{align*}
A\left(\delta,n,\tau\right)^c\cap B(n,\delta,\varepsilon)
&\subset \left\{ \frac 1{\delta^{p\alpha}}
 \sup_{h:\abs{h}\leq \delta} \int_{I_h}
\abs{S_n(t+h)-S_n(t)}^2\mathrm{d}t >\frac{\varepsilon}{2\tau } \right\}\\
&\subset \left\{\frac 2 {\delta^{p\alpha}} \sum_{k=1}^{m_n} 
\sigma_{n,k}^2X_{n,k}^2>\frac{\varepsilon}{2\tau }\right\}\\
&\cup\left\{8\sum_{k=1}^{m_n}\sigma_{n,k}^2
\max_{1\leq j\leq j'_{n,k,\delta}}\left(
 \sum_{i=j_{n,k,\delta}+1}^jX_{n,i}
 \right)^2>\frac{\varepsilon}{ \tau } \right\}  .
\end{align*}
We have already seen that for each fixed positive $\delta, \tau$, 
\begin{equation*}
 \lim_{n\to\infty}\mathbf{P}\left(
\frac 2 {\delta^{p\alpha}} \sum_{k=1}^{m_n} 
\sigma_{n,k}^2X_{n,k}^2>\frac{\varepsilon}{2\tau }\right)=0.
\end{equation*}
Using Markov's 
inequality and similar arguments as before, one gets that 
\begin{align*}
\limsup_{n\to\infty}\mathbf{P}\left(16\sum_{k=1}^{m_n}\sigma_{n,k}^2
\max_{1\leq j\leq j'_{n,k,\delta}}\left(
 \sum_{i=j_{n,k,\delta}+1}^jX_{n,i}
 \right)^2>\frac{\varepsilon}{ 2\tau } \right)\leq  32\: 
\delta^{1-p\alpha}\tau 
.
\end{align*}
If $p\alpha<1$, this quantity goes to $0$ as $\delta\to 0$ and if $p\alpha=1$, 
the quantity vanishes as $\tau\to 0$. This ends the proof of 
Theorem~\ref{thm:WIP_p_alpha_leq_1}.
\end{proof}
\begin{proof}[Proof of Corollary~\ref{cor:Besov-Theorem4-Lindeberg}]
That the conditions are sufficient follows the same lines 
as in the proof of Theorem~4 in  \cite{RS03}. For necessity, 
we  proceed as follows: if the aforementioned weak convergence holds, 
then for each $\eta$, $\varepsilon>0$, we can find $\delta$ and $n_0$ such that 
for
$n\geq n_0$, 
\[
 \mathbf P\left(\frac 1{\delta^{p\alpha}}
 \sup_{h:\abs{h}\leq \delta} \int_{I_h}
\abs{S_n(t+h)-S_n(t)}^p\mathrm{d}t >\varepsilon
  \right) \leq\eta.
\]
For $n$ such that $n\geq n_0$ and $C_2/m_n\leq \delta/2$, 
we bound the integral from below by splitting $I_h$ 
into the intervals $[a_{n,k-1},a_{n,k}-\delta]$ so that if 
$t$ belongs 
to $[a_{n,k-1},a_{n,k}]$ then so does $t+h$. Bounding $\delta$ by a constant 
times $m_n^{-1}$ 
shows that the sequence $\left(m_n^{-p/q(p,\alpha)} 
\sum_{i=1}^{m_n}\abs{X_k}^p
\right)_{n\geq 1}$ converges to $0$ in probability as $n$ goes to infinity and 
so does the sequence  
$\left(m_n^{-1/q(p,\alpha)} 
\max_{1\leq i\leq m_n}\abs{X_k} 
\right)_{n\geq 1}$. Using elementary inequalities on tails of independent 
random variables, this implies that 
$m_n^{q(p,\alpha)}\mathbf{P}(\abs{X_1}> m_n)\to 0$ as $n$ goes to infinity. 
The convergence 
\eqref{eq9} follows by bounding the term for $t\in [m_n,m_{n+1})$ 
using \eqref{eq:condition_on_mn}.
\end{proof}
 
\textbf{Acknowledgements.} 
This work was carried out during the second author's visit to the University of 
Strasbourg. The authors gratefully acknowledge that this work was partially 
supported by a grant from the International Mathematical Union's Commission for 
Developing Countries (IMU-CDC) and the Simons Foundation.

   \end{document}